\theoremstyle{plain}
\newtheorem{thm}{Theorem}[section]
\newtheorem{fact}[thm]{Fact}
\newtheorem{lem}[thm]{Lemma}
\theoremstyle{definition}
\newtheorem{defn}[thm]{Definition}
\newtheorem{remark}[thm]{Remark}
\newtheorem*{expl}{Example}
\def\Ind{\setbox0=\hbox{$x$}\kern\wd0\hbox to 0pt{\hss$\mid$\hss}
\lower.9\ht0\hbox to 0pt{\hss$\smile$\hss}\kern\wd0}
\def\Notind{\setbox0=\hbox{$x$}\kern\wd0\hbox to 0pt{\mathchardef
\nn=12854\hss$\nn$\kern1.4\wd0\hss}\hbox to
0pt{\hss$\mid$\hss}\lower.9\ht0 \hbox to
0pt{\hss$\smile$\hss}\kern\wd0}
\def\dcl{\mathrm{dcl}}
\def\acl{\mathrm{acl}}
\def\cb{\mathrm{Cb}}
\def\tp{\mathrm{tp}}
\def\stp{\mathrm{stp}}
\def\Aut{\mathrm{Aut}}
\def\U{\mathrm{U}}
\def\Q{\mathcal Q}
\title[The strong canonical base property]{On definable Galois groups and the strong canonical base property}
\author{Daniel Palac\'\i n and Anand Pillay}
\date{January 13, 2016}
\address{Universit\"at M\"unster; Institut f\"ur Mathematische Logik und
Grundlagenforschung,
Einsteinstrasse 62, 48149 M\"unster, Germany}
\email{daniel.palacin@uni-muenster.de}
\address{University of Notre Dame; Departament of Mathematics, 281 Hurley Hall,
Notre Dame, IN 46556, USA}
\email{apillay@nd.edu}
\thanks{The first author was partially supported by the project Groups, Geometry \& Actions (SFB 878) and the project L\'ogica Matem\'atica (MTM2014-59178-P). The second author was supported by NSF grant DMS-1360702, and also thanks Rahim Moosa for some preliminary discussions on the subject of this paper.}
\keywords{stable theory; definable Galois group; one-based theory; canonical base property}
\subjclass[2000]{03C45}
\begin{document}

\begin{abstract}
In  \cite{HPP}, Hrushovski and the authors proved, in a certain finite rank environment, that rigidity of definable Galois groups implies that $T$ has the canonical base property in a strong form; `` internality to"  being replaced by ``algebraicity in".  In the current paper we  give a reasonably robust definition of the ``strong canonical base property" in  a  rather more general finite rank context than \cite{HPP}, and prove its {\em equivalence}  with rigidity of the relevant definable Galois groups.  The new direction is an elaboration on the old result that $1$-based groups are rigid.

\end{abstract}

\maketitle

\section{Introduction}

In a stable theory, a set $X$ which is type-definable over a small set of parameters $A$,  is called $1$-based if for any tuple $b$ of elements of $X$ and set $B$ of parameters containing $A$, the canonical base of $\stp(b/B)$ is contained  in $\acl(b,A)$.  If $X = G$ happens to be a type-definable group then $1$-basedness of $G$ implies {\em rigidity} of $G$, which means that every connected type-definable subgroup of $G$ is type-definable over $\acl(A)$ \cite{HP}.  But rigidity of $G$ does not imply $1$-basedness: for example in ${\rm ACF}_{0}$ semiabelian varieties are rigid, but not $1$-based.

%The study of geometric properties such as {\em one-basedness} has contributed to understand not only the geometric aspects of stability theory, but also has been essential for the %applications of stability to other areas in mathematics. One-based sets are those where forking independence is trivial or behaves as in modules. After \cite{HP}, one-based groups are %abelian-by-finite and {\em rigid}. That is, every type-definable connected subgroup is definable almost without parameters, see Definition \ref{DefRigid}. Other relevant examples of rigid %groups are semi-abelian varieties defined over an algebraically closed field.
Recently, relative versions of one-basedness have come into play.
% and they have been crucial to understand the connections between model theory and the structure theory of compact complex spaces or differentially closed fields.
The main example is the canonical base property (CBP), which originates in \cite{pil02} and \cite{PZ}, was formally defined in \cite{MooPil}, and also studied by Chatzidakis \cite{zoe}, and in a more general framework by Palac\'in and Wagner \cite{pal-wag}, where (a weak version of) the CBP is indeed treated as a generalization of one-basedness.

The CBP, as formulated in \cite{MooPil} is a  property appropriate to {\em finite rank} stable theories $T$ and says (of $T$) that for any tuple $b$ and set of parameters $B$,  if $c$ is the canonical base of $\stp(b/B)$, then $\stp(c/b)$ is almost  internal to  the family of $\U$-rank $1$ types (or equivalently, to the  family of nonmodular $\U$-rank $1$  types).  The CBP  holds of the many-sorted theory ${\rm CCM}$ of compact complex spaces (see \cite{pil02}) as well as the finite Morley rank part of the  theory ${\rm DCF}_0$ of differentially closed fields of characteristic $0$ (see \cite{PZ}). Moreover, as pointed out in \cite{PZ},  this ${\rm DCF}_{0}$ case yields a quick account of function field Mordell-Lang in characteristic $0$.

%The model-theoretic formulation of the CBP was motivated by results of Campana and Fujiki (see \cite{pil02}) in bimeromorphic geometry and analogous results by Pillay and Ziegler \cite{PZ} %in differential and difference algebraic geometry in characteristic $0$. They observe that a group-like version of the CBP in the case of differentially closed fields of characteristic $0$ yields %a proof of the Mordell-Lang Conjecture for function fields in characteristic $0$ without using Zariski geometries. Furthermore, in the case of ${\rm DCF}_0$ the CBP gives another proof that %the constant field is the unique nonmodular (i.e. non one-based) strongly minimal definable set (up to nononthogonality).
%Formally, the CBP states that for any tuple $a$ of finite $\U$-rank and any $b$, the type of the canonical base of $\stp(a/b)$ over $a$ is {\em almost internal} to the family of $\U$-rank $1$ types.
%By the canonical base of a stationary type $p$ we mean the smallest definable closed subset $\cb(p)$ such that $p$ does not fork over $\cb(p)$ and $p|\cb(p)$ is stationary.
In the joint paper with Hrushovski \cite{HPP} an example appears of a finite rank, in fact $\aleph_{1}$-categorical theory, where the CBP fails.  Now if $T$ is  an  $\aleph_{1}$-categorical theory, then the obstruction to $T$ being almost strongly minimal is given by infinite definable Galois groups (using the theory of analyzability and definable automorphism groups), of which precise definitions will be given below.  Similarly for finite rank nonmultidimensional theories.
In \cite{HPP} we worked, for convenience, with finite rank theories $T$ which are ``coordinatised"  by strongly minimal formulas over $\emptyset$, and proved that if all relevant definable Galois groups are rigid then $T$ has the CBP in the strong form that whenever
 $c = \cb(\stp(b/c))$, then $c$ is contained in the algebraic closure of $b$ and realizations of strongly minimal formulas over $\emptyset$.  (This was generalized in \cite{pal-wag2} using results in \cite{zoe}).  In the current paper we  give a robust definition of  $T$ having the {\em strong canonical base property} in a setting somewhat more general than that considered in \cite{HPP}, and prove its a equivalence with the rigidity of the relevant definable Galois groups (see Theorem \ref{ThmFinRank} in Section 3).
 In Section 2, we give a ``local" version of the equivalence between rigidity and the strong CBP  (see Theorem \ref{ThmRigid}).

%Despite it is a property of the finite-rank context, not all stable theories of finite rank satisfy the CBP, see \cite{HPP}. Nevertheless, in a joint work with Hrushovski \cite{HPP} we proved %that under suitable assumptions rigidity of definable Galois groups yields a stronger version of the CBP, which we call {\em strong CBP}.  On the other hand, Palac\'in and Wagner %\cite{pal-%wag} shown for an arbitrary stable theory using results from \cite{zoe} that rigidity of arbitrary stable theories in \cite{pal-wag2} by Palac\'in and Wagner using results from %\cite{zoe}.

%In this paper we shall see that rigidity of definable Galois groups (see Fact \ref{FactGal}) is merely a formalization of a one-based phenomena. More precisely, we establish the %correspondence between the rigidity assumption and the strong CBP (Theorem \ref{ThmFinRank}), generalizing the results from \cite{HPP}. Furthermore, we give a {\em local} %characterization of the rigidity (Theorem \ref{ThmRigid}).

We will assume familiarity with stability theory and geometric stability theory, and  the reader is referred to \cite{PilBook}. Nevertheless, we give here a brief discussion of some of the key notions of this paper.
We shall be working inside a monster model $\mathfrak M^{\rm eq}$ of a first-order stable theory $T$. Sometimes we talk about global types, namely complete types over this monster model. If $b$ is a tuple and $B$ a set of parameters then the canonical base of $\stp(b/B)$, which we denote $\cb(\stp(b/B))$,  is the smallest definably closed (in $\mathfrak M^{\rm eq}$) subset $C$ of $\acl(B)$ such that $b$ is independent from $\acl(B)$ over $C$ and $\tp(b/C)$ is stationary. We often identify $C$ with a tuple enumerating it, or with a tuple with which it is interdefinable.  Sometimes we write $\cb(b/B)$ for $\cb(\stp(b/B))$. When $T$ is totally transcendental $\cb(\stp(b/B))$ can be taken to be a finite tuple.

Let us  fix a family $\Q$ of partial types over  small sets of parameters. For $B$ a set of parameters, we say $a$ realizes $\Q$ over $B$ if there is a partial type $\Psi$ in $\Q$ over a set of parameters contained in $B$ and $a$ realizes $\Psi$.

Let $p$ be a stationary type over a set $A$. We say that $p$ is {\em (almost) internal to $\Q$} if there
is a superset $B$ of $A$, some realization $a$ of $p|B$ (the unique nonforking extension of $p$ over $B$), and a
tuple $\bar b$ of realizations of  $\Q$ over $B$ such that $a\in\dcl(B,\bar b)$ (or $a\in\acl(B,\bar b)$, respectively).

Internality gives rise to Galois groups (possibly trivial) typically when $p\in S(A)$ and $\Q$ is a family of partial types over $A$.  In this case we sometimes identify $\Q$ with the union of the sets of realizations of the partial types in $\Q$.  Let $\Q$ be such, and let $p\in S(A)$ be stationary and internal to $\Q$. Then according to Lemma 7.4.2 of \cite{PilBook}, there exists an
$A$-definable function $f(\bar x,\bar y)$, partial types types $\Psi_0,\ldots,\Psi_n$ in $\Q$ and
some tuple $\bar a$ of realizations of $p$ such that,  for any realization $a$
of $p$ there is a tuple $\bar c=(c_0,\ldots,c_n)$ with $c_i$ realizing $\Psi_i$
such that $a=f(\bar a,\bar c)$. The tuple $\bar a$ is usually called a {\em
fundamental system of solutions} of $p$ relative to $\Q$.   This is   required to
 obtain the following result, which is \cite[Theorem 7.4.8]{PilBook}, and due to Hrushovski at this level of generality, although in more special cases (such as $\aleph_{1}$-categorical theories) it was observed by Zilber.

\begin{fact}\label{FactGal}
If $\Q$ is a family of partial types over $A$ and $p$ is a stationary type over $A$ internal to $\Q$, then there is an
$A$-type-definable group $G$ and an $A$-definable action of $G$ on the set of
realizations of $p$ which is naturally isomorphic (as a group action) to the
group $\Aut(p/\Q,A)$ of permutations of the set of realizations of $p$ induced by the
automorphisms of $\mathfrak M$ fixing $A\cup \Q$ pointwise.
%Moreover, if some
%(any)
%realization of $p$ is independent from $\Q$ then $G$ acts transitively on the
%set of realizations of $p$.
\end{fact}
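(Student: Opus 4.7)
The strategy is to realize $G$ concretely as a quotient of tuples from $\Q$, using the fundamental system $\bar a$ and the $A$-definable function $f(\bar x, \bar y)$ discussed above as an atlas for the realizations of $p$, and then upgrade the construction to be $A$-type-definable by means of stationarity.

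First, every $\sigma \in \Aut(p/\Q, A)$ is determined by its restriction to $\bar a$: writing an arbitrary realization of $p$ as $a = f(\bar a, \bar c)$ for some tuple $\bar c$ of realizations from $\Q$, and noting that $\sigma$ fixes $A \cup \Q$ pointwise, we get $\sigma(a) = f(\sigma(\bar a), \bar c)$. Conversely, any $\bar a'$ realizing $q := \tp(\bar a / A\cup \Q)$ is of the form $\sigma(\bar a)$ for some such $\sigma$, by extending the partial elementary map $\bar a \mapsto \bar a'$ using saturation. Hence $\sigma \mapsto \sigma(\bar a)$ identifies $\Aut(p/\Q, A)$ with the set of realizations of $q$.

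Applying the fundamental system property coordinate by coordinate, there is an $A$-definable function $g(\bar x, \bar y)$, built from $f$, such that every realization of $q$ has the form $g(\bar a, \bar c)$ for some $\bar c$ from $\Q$. Set
\[
H = \{\bar c : g(\bar a, \bar c) \models q\}, \qquad \bar c \, E \, \bar c' \iff g(\bar a, \bar c) = g(\bar a, \bar c').
\]
Then $H/E$ is in bijection with $\Aut(p/\Q, A)$. The group law on $H/E$, inherited from composition of automorphisms, is $[\bar c_1] \cdot [\bar c_2] = [\bar c]$ where $g(\bar a, \bar c) = g(g(\bar a, \bar c_1), \bar c_2)$, and the action on $p$ is $[\bar c] \cdot f(\bar a, \bar c_a) = f(g(\bar a, \bar c), \bar c_a)$; both are visibly $A\bar a$-definable.

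The sensitive point, and the main obstacle, is descending this construction to be $A$-type-definable rather than only over $A\bar a$. The condition ``$g(\bar a, \bar c) \models q$'' expresses that $g(\bar a, \bar c)$ realizes the unique nonforking extension of the stationary type $\tp(\bar a/A)$ to $A\cup \Q$, which by the definability of types over their canonical bases can be reformulated $A$-type-definably in $\bar c$, modulo $E$. Taking the quotient by $E$ in the appropriate (hyper)imaginary sort then yields an $A$-type-definable group $G$ whose $A$-definable action on the realizations of $p$ is, by construction, isomorphic as a group action to $\Aut(p/\Q, A)$.
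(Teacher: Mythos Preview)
The paper does not prove this statement at all: it is quoted as a fact from the literature, with a reference to \cite[Theorem~7.4.8]{PilBook} and an attribution to Hrushovski (and Zilber in special cases). So there is no ``paper's own proof'' to compare against; what you have written is a sketch of the classical argument.

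Your outline is the standard one up to the point where the group and its action are constructed over $A\bar a$. The gap is in the descent to $A$. You assert that $q=\tp(\bar a/A\cup\Q)$ is the unique nonforking extension of $\tp(\bar a/A)$, and then invoke definability of types. But there is no reason for $\bar a$ to be independent from $\Q$ over $A$: already when $p$ itself lies in $\Q$, or more generally whenever some coordinate of a realization of $p$ is algebraic over $\Q$, the fundamental tuple $\bar a$ forks with $\Q$ over $A$. (Concretely: take $\Q$ a single strongly minimal set $D$ over $\emptyset$ and $p$ the generic type of a finite cover $X\to D$; then any realization of $p$ forks with $D$.) So the appeal to definability of the nonforking extension does not give an $A$-type-definable description of $H$, and your final paragraph does not go through as written.

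The honest descent is more delicate. One standard route (as in \cite{PilBook}) is to work not with single fundamental tuples but with \emph{pairs}: one identifies $G$ with the set of pairs $(\bar a_1,\bar a_2)$ of realizations of $\tp(\bar a/A)$ satisfying $\bar a_2\in\dcl(A,\bar a_1,\Q)$, modulo the $A$-type-definable equivalence relation ``same group element'', i.e.\ $(\bar a_1,\bar a_2)\sim(\bar a_1',\bar a_2')$ iff $\tp(\bar a_1,\bar a_2/A,\Q)=\tp(\bar a_1',\bar a_2'/A,\Q)$; one then checks this relation, the group law, and the action are all invariant under $\Aut(\mathfrak M/A)$ and hence $A$-type-definable. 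Your parametrization by $\bar c$'s from $\Q$ can be made to fit into this, but the invariance argument, not definability of a nonforking extension, is what carries the descent.
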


We call the group $G$ from Fact \ref{FactGal} the {\em (type-)definable Galois group of $p$ relative to $\Q$}, and identify it with
$\Aut(p/\Q,A)$; sometimes it is also called the liason group or binding group.
Observe that whenever $p$ is internal to $\Q$ and $\bar a$ is a fundamental system
of solutions of $p$ relative to $\Q$, $\tp(\bar a/A)$ is also internal to $\Q$,  $G = \Aut(p/\Q,A)$ acts (definably over $A$)  on the
set of realizations of $\tp(\bar a/A)$, and moreover coincides with $\Aut(\tp(\bar a/A)/\Q,A)$. Also, an easy argument (see
%it is easy to see that the group of permutations of $p$ (induced by the automorphisms of $\mathfrak M$) that fixes pointwise $\Q$ coincides with the group of elementary permutations of the set of realizations of $\tp(\bar a/\Q)$. In fact,
Claim I of \cite[Theorem 7.4.8]{PilBook}) yields that
%the Galois group
%$\Aut(p/\Q)$ is indeed in bijection with the set of
%realizations of $\tp(\bar a/\Q)$. As a consequence:

\begin{fact}\label{FactFree}
The type-definable group $\Aut(p/\Q,A)$ acts freely on the set of realizations of
$\tp(\bar a/A)$, that is for some/any realization $\bar b$ of $\tp(\bar a/A)$, and $\sigma\in  \Aut(p/\Q,A)$, $\sigma$ is determined by $\sigma(\bar b)$.
\end{fact}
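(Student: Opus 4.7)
The plan is to use the defining property of a fundamental system of solutions to show that any element of $\Aut(p/\Q,A)$ fixing $\bar a$ (or, more generally, fixing some $\bar b \models \tp(\bar a/A)$) must fix every realization of $p$ pointwise, and hence be the identity.

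First I would reduce to the ``some'' version of the statement, then transfer. Explicitly, I would note that the assertion ``$\bar x$ is a fundamental system of solutions of $p$ relative to $\Q$ witnessed by $f, \Psi_0, \ldots, \Psi_n$'' is expressible by a partial type over $A$ (it says, roughly, that $\bar x \models \tp(\bar a/A)$ and for every realization $a$ of $p$ there exist $c_i \models \Psi_i$ with $a = f(\bar x, \bar c)$). Since $\bar b$ realizes $\tp(\bar a/A)$, $\bar b$ itself is again a fundamental system of solutions: for every realization $a$ of $p$ there is $\bar c = (c_0, \ldots, c_n)$ with $c_i \models \Psi_i$ such that $a = f(\bar b, \bar c)$.

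Now suppose $\sigma \in \Aut(p/\Q,A)$ satisfies $\sigma(\bar b) = \bar b$. Pick an arbitrary realization $a$ of $p$ and write $a = f(\bar b, \bar c)$ as above. Since each $c_i$ realizes a partial type $\Psi_i$ in $\Q$, the tuple $\bar c$ consists of elements of $\Q$, and therefore $\sigma$ fixes $\bar c$ pointwise. Since $f$ is $A$-definable and $\sigma$ fixes $A$, we get
\[
\sigma(a) = \sigma\bigl(f(\bar b, \bar c)\bigr) = f(\sigma(\bar b), \sigma(\bar c)) = f(\bar b, \bar c) = a.
\]
Thus $\sigma$ is the identity on the set of realizations of $p$. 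By definition, elements of $\Aut(p/\Q,A)$ are permutations of that set, so $\sigma = 1$. Equivalently, if $\sigma, \tau \in \Aut(p/\Q,A)$ satisfy $\sigma(\bar b) = \tau(\bar b)$, then $\sigma^{-1}\tau$ stabilizes $\bar b$ and hence equals the identity, so $\sigma = \tau$; that is, $\sigma$ is determined by $\sigma(\bar b)$.

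I do not anticipate a serious obstacle here: the only point requiring a moment's care is the transfer from the particular fundamental system $\bar a$ produced by Lemma 7.4.2 of \cite{PilBook} to an arbitrary $\bar b$ realizing $\tp(\bar a/A)$, which is immediate from the type-definability of the relevant property. The core of the argument is simply that $\sigma$ fixes both the ``parameters'' $\bar b$ and the ``$\Q$-coordinates'' $\bar c$ used to express $a$, so it must fix $a$.
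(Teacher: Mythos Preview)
Your argument is correct and is exactly the standard one: if $\sigma$ fixes a fundamental system $\bar b$, then writing any realization $a$ of $p$ as $f(\bar b,\bar c)$ with $\bar c$ in $\Q$ forces $\sigma(a)=a$, so $\sigma$ is the identity; and the transfer from $\bar a$ to an arbitrary $\bar b\models\tp(\bar a/A)$ is immediate since being a fundamental system (with the given $f,\Psi_0,\ldots,\Psi_n$) is preserved by automorphisms over $A$.

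Note that the paper does not actually supply its own proof of this fact: it simply refers the reader to Claim I in the proof of \cite[Theorem 7.4.8]{PilBook}. Your write-up is precisely the argument one finds there, so there is nothing to compare beyond observing that you have reproduced the intended proof rather than merely citing it.
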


This observation plays an essential role in the proof of Lemma \ref{PropGal}. On the other hand
observe that there is no reason why $\Aut(p/\Q,A)$ should act freely on
the set of realizations of $\tp(a/\Q,A)$ for $a$ a single realization of $p$.

If $p\in S(A)$ is stationary and internal to $\Q$ we say that $p$ is {\em fundamental} if some (any) realization of $p$ is already a fundamental system of solutions of $p$ relative to $\Q$.

%To end this introduction, we recall the definition of analyzability and a fact .

\section{The local theory}
In this section we focus on internality and analyzabilty with respect to some given collection $\Q$ of partial types, and obtain key results relating rigidity of Galois groups to canonical bases.  In the final section this will be applied to suitable finite rank theories together with a canonical choice for $\Q$.

Recall that we assume the ambient theory to be stable, and let now $\Q$ denote a family of partial types with parameters over a set $A$.
In the proof below we will make use of the fact that (in a stable theory), type-definable groups have canonical parameters, namely for any type-definable group $H$ there is a (small) tuple of parameters $u$ such that an automorphism of the monster model fixes $H$ setwise iff it fixes the tuple $u$.  See Remark 1.6.20 of \cite{PilBook}. Likewise for type-definable homogeneous spaces (namely orbits under type-definable group actions).

%Firstly, we prove that any connected subgroup of a definable Galois group is indeed a definable Galois group as well.

\begin{lem}\label{PropGal}
Let $p = \tp(a/A)$ be a stationary complete type over $A$ which is internal to $\Q$, and fundamental.
Let $G$ be its type-definable over $A$ Galois group relative to $\Q$. Let  $H$ be a
type-definable connected subgroup of $G$, $u$ its canonical parameter,  and let $d$ be the   canonical
parameter of the orbit $H\cdot a$.  Then
\begin{enumerate}
\item $u\in \dcl(d,A)$
\item $\tp(a/A,d)$ implies $\tp(a/A,d,\Q)$
\item$\tp(a/A,d)$ is stationary and $d$ is interdefinable over $A$ with
\newline
$\cb(\tp(a/A,d))$.
\end{enumerate}
%Furthermore, we obtain that $d$ is interdefinable with its canonical base
\end{lem}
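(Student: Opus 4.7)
Throughout, the critical input will be freeness of the $G$-action on $p(\mathfrak M)$ (Fact \ref{FactFree}), applicable since $p$ is fundamental and so $a$ itself serves as a fundamental system of solutions; concretely, any element in the $G$-orbit of $a$ is of the form $g\cdot a$ for a unique $g\in G$.

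For (1), let $\sigma$ fix $A\cup\{d\}$. Then $\sigma(H\cdot a)=H\cdot a$, and since $a\in H\cdot a$ we have $\sigma(a)=h_0\cdot a$ for some $h_0\in H$. Expanding, $\sigma(H\cdot a)=\sigma(H)\cdot(h_0\cdot a)=(\sigma(H)h_0)\cdot a=H\cdot a$, and freeness forces $\sigma(H)h_0=H$, so $\sigma(H)=H$ because $h_0\in H$. Hence $\sigma$ fixes $u$, proving $u\in\dcl(A,d)$.

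For (2), given $a'\equiv_{A,d}a$, pick any automorphism $\sigma$ of $\mathfrak M$ fixing $A\cup\{d\}$ with $\sigma(a)=a'$. As in (1), $a'=h\cdot a$ for some $h\in H$. Let $\tau\in\Aut(p/\Q,A)$ correspond via Fact \ref{FactGal} to $h\in G$, so $\tau$ fixes $A\cup\Q$ pointwise and $\tau(a)=h\cdot a=a'$. Using that the $G$-action is $A$-definable (hence preserved by $\tau$) together with freeness, a direct computation on $\tau(g\cdot a)=\tau(g)\cdot\tau(a)$ shows that $\tau$ acts on $G$ by conjugation by $h$; in particular $\tau(H)=hHh^{-1}=H$ and $\tau(H\cdot a)=H\cdot(h\cdot a)=H\cdot a$, so $\tau$ fixes $d$. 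Therefore $a'\equiv_{A,d,\Q}a$.

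For (3), note first from (2) that the realization set of $\tp(a/A,d)$ is exactly the orbit $H\cdot a$. For stationarity, consider the stabilizer in $H$ of the strong type $\tp(a/\acl(A,d))$; this is a type-definable subgroup of $H$ whose index is bounded by the number of completions of $\tp(a/A,d)$ to $\acl(A,d)$. Connectedness of $H$ then forces this stabilizer to equal $H$, so every element of $H\cdot a$ has the same type as $a$ over $\acl(A,d)$, establishing stationarity. The inclusion $c:=\cb(\tp(a/A,d))\in\dcl(A,d)$ is then automatic; for the reverse $d\in\dcl(A,c)$, given an $A$-automorphism $\sigma$ fixing $c$, the parallel types $\tp(a/A,d)$ and $\tp(\sigma(a)/A,\sigma(d))$ admit a common non-forking realization $b$ lying in both $H\cdot a$ and $\sigma(H)\cdot\sigma(a)$; recovering $H$ from a Morley sequence in the non-forking extension (whose canonical data is captured by $c$) forces $\sigma(H)=H$, and then $\sigma(d)=d$ because the two orbits through $b$ coincide. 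The delicate step, which I expect to be the main obstacle, is precisely this last one --- reconciling the canonical \emph{parameter} of the orbit with the canonical \emph{base} of the type.
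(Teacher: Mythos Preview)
Your arguments for (1), (2), and the stationarity half of (3) are correct. For (1) the paper gives a one-line alternative: freeness yields the explicit description $H=\{g\in G:\exists b,c\in X\ (g\cdot b=c)\}$, so $H$ is type-defined over $d$ directly; your automorphism computation reaches the same conclusion. For (2) you are doing essentially what the paper does, except the paper simply asserts that the automorphism realizing $h$ fixes $X$ setwise (which is immediate since $h\in H$), whereas you prove it via the conjugation calculation --- correct but longer than needed. For stationarity the paper takes a different and shorter route: once you know $(H,X)$ is a principal homogeneous space type-defined over $d$ (from (1)), connectedness of $H$ gives a \emph{unique} generic type of $X$ over $d$, necessarily stationary; since every element of $X$ realises $\tp(a/d)$, that type must be the generic one. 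Your stabilizer argument also works (the stabilizer of $\stp(a/A,d)$ is indeed a type-definable subgroup of bounded index, hence all of $H$ by connectedness), but the generic-type viewpoint is what makes the next step clean.

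The genuine gap is exactly the one you flag: showing $d\in\dcl(A,\cb(\tp(a/A,d)))$. Your ``recover $H$ from a Morley sequence'' gesture does not go through as stated --- a Morley sequence in $\tp(a/A,d)$ is just a sequence of elements of $X$, and there is no evident way to read off $H$ (as opposed to merely some coset information) from it without already knowing $d$. The paper's argument uses the generic-type identification from the previous paragraph: let $q$ be the global nonforking extension of $\tp(a/A,d)$, which is precisely the global generic type of the PHS $(H,X)$. Take any automorphism $\sigma$ of $\mathfrak M$ with $\sigma(q)=q$; we must show $\sigma(d)=d$. If $\sigma(H)=H$, then $\sigma(X)$ is another $H$-orbit, and $\sigma(q)=q$ forces $\sigma(X)\cap X\neq\emptyset$, hence $\sigma(X)=X$, i.e.\ $\sigma(d)=d$. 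If $\sigma(H)=H'\neq H$, then $X\cap\sigma(X)$ (nonempty, again since $\sigma(q)=q$) is an orbit of the \emph{proper} subgroup $H\cap H'$ of the connected group $H$; but then the generic $q$ of $X$ implies $x\notin X\cap\sigma(X)$, contradicting $\sigma(q)=q$ (which forces $q$ to concentrate on $\sigma(X)$). This case split on $\sigma(u)$ is the missing idea.
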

\begin{proof} We will be using freely the basic material in  Chapter 1, Section 6, of \cite{PilBook} on  stable groups. For notational convenience we assume $A = \emptyset$.  Let $X$ be $H\cdot a$. Now by Fact 1.2,  $G$ is acting freely on the set of realizations of $p$, hence
$$
H=\{g\in G: g\cdot b = c \mbox{ for some  } b,c\in X\}.
$$ So $H$ is (type-)defined over $d$, whereby $u\in dcl(d)$, giving (1).

Suppose $\tp(b/d) = \tp(a/d)$. Thus $b\in X$ and so, there is some $h\in H$ such that $h\cdot a = b$. But the map taking $x\in X$ to $h\cdot x\in X$ is induced by an automorphism $\sigma$ of $\mathfrak M$ which fixes $X$ setwise and $\Q$ pointwise. Hence $\tp(b/d,\Q) = \tp(a/d,\Q)$, giving (2).

We have already seen that all  elements of $X$ have the same type over $d$.  But bearing in  mind part (1),  the principal homogeneous space $(H,X)$ is (type-)defined over $d$ and by connectedness of $H$ has a unique generic type over $d$ which is moreover stationary. So this unique generic type has to coincide with $\tp(a/d)$.  We have shown stationarity of $\tp(a/d)$.  Let $q$ be the unique global (i.e. over $\mathfrak M$) nonforking extension of $\tp(a/d)$, namely $q$ is precisely the unique global generic type of the principal homogeneous space  $(H,X)$. Let $\sigma$ be an automorphism of $\mathfrak M$ such that $\sigma(q) = q$. To show that $d = \cb(\stp(a/d))$ we must show that $\sigma(d) = d$.  Suppose first that  $\sigma(u) = u$, so $\sigma(X)$ is also an orbit under $H$, so is either equal to $X$ or disjoint from $X$. As $\sigma(q) = q$, $\sigma(X)$ is not disjoint from $X$. Hence $\sigma(X) = X$, namely $\sigma(d) = d$.  On the other hand, suppose $\sigma(u) \neq u$, namely $\sigma(H) = H' \neq H$. Let $X' = \sigma(X)$ and note that $\sigma(q)=q$ implies that $X'\cap X\neq \emptyset$. So $X'\cap X$ is an orbit under $H'\cap H$. The latter is a proper type-definable subgroup of the connected group $H$, where by $q$ being the generic type of $X$ implies $x\notin X'\cap X$. This contradicts $q= \sigma(q)$ (and $\sigma(q)\in X'$).
\end{proof}

%Let $H$ be a type-definable connected subgroup of $G$, and let $u$ denote its
%canonical parameter which is possibly an infinite tuple.  Also, let $d$ to be the canonical parameter (possibly an infinite tuple as well) of the
%orbit of $a$ under the action of $H$.   Let $X$ be the type-definable (over $d$) set $H*\,\bar a$. Then as by \ref{FactFree}, $H$ acts freely on $p$, $H$ %acts regularly on $X$
%of realizations of
%$\tp(a/d)$, and since any element of
%$H$ is induced by an automorphism of
%$\mathfrak M$ that fixes pointwise $\Q$, we get that $\tp(a/d)\vdash
%\tp(a/d,\Q)$. Thus
%$$
%a\ind_d\acl(d,\Q).
%$$

%\begin{remark}\label{RemGal}
%The proof (or the statement) also yields that the Galois group
%$\Aut(\tp(a/A,d)/\Q)$ acts transitively on the set of realizations of
%$\tp(a/A,d)$, and that $d$ is interdefinable with $\cb(a/A,d)$ over $A$.
%\end{remark}

\begin{defn}\label{DefRigid}
An $A$-type-definable group $G$ is said to be {\em rigid} if any type-definable connected subgroup is type-defined over $\acl(A)$.
\end{defn}

\begin{lem}\label{LemEquiv}
Let $p=\tp(a/A)$ be a stationary type which is internal to $\Q$, and let $G$ be the Galois group of $p$ relative to $\Q$.

Consider the following conditions:
\begin{enumerate}
 \item $G$ is rigid
 \item For any tuple $c$,  $\cb(\stp(c/A,a))\in\acl(A,c,\Q)$.
\end{enumerate}

Then (1) implies (2).  Moreover if $p$ is also fundamental then (2) implies (1).
\end{lem}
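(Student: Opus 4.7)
The plan for both directions is to apply Lemma~\ref{PropGal}, which relates canonical parameters of orbits of connected subgroups of $G$ to canonical bases of types of $a$.

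For $(1)\Rightarrow(2)$, set $d_0 := \cb(\stp(c/A,a))$ and first reduce to the case that $p$ is fundamental: replace $a$ by a fundamental system of solutions $\bar a = (a, a_1, \ldots, a_n)$ chosen with $(a_1,\ldots,a_n) \ind_{Aa} c$, so that $\cb(\stp(c/A,\bar a))$ is interalgebraic with $d_0$ over $A$ while the Galois group remains $G$ by the observation preceding Fact~\ref{FactFree}. Now consider the stabilizer $H \leq G$ of the stationary type $\stp(a/A,d_0)$, and let $H^0$ be its connected component. By rigidity of $G$, $H^0$ is type-definable over $\acl(A)$. Let $e$ be the canonical parameter of the orbit $H^0\cdot a$. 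Applying Lemma~\ref{PropGal} to $H^0$ yields that $e$ is interdefinable with $\cb(\tp(a/A,e))$ over $A$ and, crucially, $\tp(a/A,e) \vdash \tp(a/A,e,\Q)$. The canonical parameter of the larger orbit $H\cdot a$ is interalgebraic over $A$ both with $e$ (by the finite index of $H^0$ in $H$) and with $d_0$ (by the standard correspondence between orbit parameters and canonical bases), giving $d_0 \in \acl(A,e)$.

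It remains to show $e \in \acl(A,c,\Q)$. Fix $\sigma \in \Aut(\mathfrak M/A,c,\Q)$: then $\sigma(H^0)$ is one of finitely many $\acl(A)$-conjugates of $H^0$, and the independence $c \ind_{A,e} a$ (which follows from $c \ind_{A,d_0} a$ and $d_0 \in \dcl(A,e)$), combined with the conclusion $\tp(a/A,e) \vdash \tp(a/A,e,\Q)$ of Lemma~\ref{PropGal}, forces $\sigma(e)$, the canonical parameter of $\sigma(H^0)\cdot\sigma(a)$, into a finite set. Hence $e \in \acl(A,c,\Q)$ and so $d_0 \in \acl(A,c,\Q)$.

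For $(2)\Rightarrow(1)$, assume in addition that $p$ is fundamental, and let $H$ be a type-definable connected subgroup of $G$ with canonical parameter $u$; we need $u \in \acl(A)$. Let $d$ be the canonical parameter of $H\cdot a$; Lemma~\ref{PropGal} gives $u \in \dcl(A,d)$ and $d$ interdefinable over $A$ with $\cb(\tp(a/A,d))$. Choose $c$ realising $\tp(a/A,d)$ with $c \ind_{A,d} a$ and additionally generic over $A,u,\Q$. Since $\stp(c/A,a)$ is then the non-forking extension of $\stp(c/A,d)$, one has $\cb(\stp(c/A,a))$ interalgebraic with $d$ over $A$. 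Hypothesis (2) then yields $d \in \acl(A,c,\Q)$, hence $u \in \acl(A,c,\Q)$. The genericity of $c$ and an intersection argument over several $A,\Q$-independent such choices give $u \in \acl(A,\Q)$, from which $u \in \acl(A)$ follows because canonical parameters of subgroups of the Galois group $G$ cannot acquire new algebraic content upon adjoining realisations of $\Q$.

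The chief obstacle is the finiteness claim for $\sigma(e)$ in the argument for $(1)\Rightarrow(2)$: it depends crucially on using Lemma~\ref{PropGal}(2) to transfer control from $a$'s type over $A,e$ (which describes the $H^0$-orbit) to its type over $A,e,\Q$ (which $\sigma$ preserves), while the passage from $u \in \acl(A,\Q)$ to $u \in \acl(A)$ in $(2)\Rightarrow(1)$ requires extra care about how $G$ is defined relative to $\Q$.
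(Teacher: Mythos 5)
Both directions of your proposal contain genuine gaps, and in each case the gap sits exactly where the paper's proof does its real work. For $(1)\Rightarrow(2)$, your choice of subgroup is the problem. You take $H^0$ to be the connected stabilizer of $\stp(a/A,d_0)$ and then rest everything on two assertions: that the canonical parameter of $H\cdot a$ is interalgebraic over $A$ with $d_0$ ``by the standard correspondence between orbit parameters and canonical bases'', and that $\sigma(e)$ ranges over a finite set as $\sigma$ varies in $\Aut(\mathfrak M/A,c,\Q)$. Neither is a standard fact, and the second --- which you yourself flag as the chief obstacle --- is precisely what requires proof: nothing connects the orbit of your stabilizer group to automorphisms fixing $c$ and $\Q$, since $c$ enters your construction only through $d_0$. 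Two further slips: the ``finite index of $H^0$ in $H$'' is false in a general stable theory (the connected component of a type-definable group has bounded, not necessarily finite, index), and the inference of $c\ind_{A,e}a$ from $c\ind_{A,d_0}a$ and $d_0\in\dcl(A,e)$ is base monotonicity in the wrong direction (repairable via $e\in\acl(A,a)$, but you do not establish that). The paper instead takes $H$ to be the connected component of the Galois group of $\stp(a/c)$ relative to $\Q$, viewed as a subgroup of $G$. Rigidity puts $H$ over $\acl(A)$, so $d\in\acl(A,a)$; the orbit $H\cdot a$ is then exactly the set of realizations of $\stp(a/c,\Q)$, giving $d\in\acl(A,c,\Q)$ outright (your problematic finiteness claim); and $\tp(a/A,d)\models\tp(a/A,c,d)$ yields $c\ind_{A,d}a$, whence $\cb(\stp(c/A,a))=\cb(\stp(c/A,d))\in\acl(A,d)$ --- only this one algebraicity is needed, and indeed full interalgebraicity of $d$ with $d_0$ fails in general. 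Note also that the paper's argument for this direction uses neither fundamentality of $p$ nor Lemma \ref{PropGal}, so your reduction to the fundamental case is superfluous.

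For $(2)\Rightarrow(1)$ there are two fatal problems. First, $\stp(c/A,a)$ cannot be ``the nonforking extension of $\stp(c/A,d)$'': the domain $A\cup\{a\}$ does not contain $d$, and your claim that $\cb(\stp(c/A,a))$ is interalgebraic with $d$ over $A$ presupposes $d\in\acl(A,a)$ --- which is essentially the conclusion being proved, so the argument is circular. Second, and decisively, your closing principle --- that $u\in\acl(A,\Q)$ forces $u\in\acl(A)$ because canonical parameters of subgroups of $G$ ``cannot acquire new algebraic content'' from $\Q$ --- is simply false: in the paper's own ${\rm DCF}_0$ example, conjugates of a torus in $G\cong{\rm GL}_n(\mathcal C)$ are connected subgroups defined over constants, i.e.\ over realizations of $\Q$, whose canonical parameters are not in $\acl$ of the base; were your principle valid, non-rigidity of that Galois group could not be witnessed by such subgroups at all. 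The paper sidesteps both traps by fixing the independence at the start rather than recovering it at the end: choose the realization $a$ of $p$ with $a\ind_A u$, apply hypothesis (2) with $c:=d$ to get $\cb(\stp(d/A,a))\in\acl(A,d,\Q)\cap\acl(A,a)$, use Lemma \ref{PropGal}(2) (which gives $a\ind_{A,d}\Q$) to cut this down to $\acl(A,d)$, then use Lemma \ref{PropGal}(3) ($d$ interdefinable over $A$ with $\cb(\tp(a/A,d))$) to conclude $d\in\acl(A,a)$, hence $u\in\acl(A,a)$ by Lemma \ref{PropGal}(1), and finally $u\in\acl(A)$ from $u\ind_A a$. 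Your version never uses Lemma \ref{PropGal}(2) where it is actually needed, and defers the removal of $\Q$ to a final step that does not hold.
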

\begin{proof} The fact that (1) implies
(2) is precisely \cite[Lemma 2.3]{HPP}. Here we take the opportunity to   give a somewhat cleaner presentation of the proof.

Again for simplicity of notation assume $A = \emptyset$.
Assume (1). Given a tuple $c$, the stationary
type $\stp(a/c)$ is also internal to $\Q$. Let $H$ be the connected component of its Galois group (relative to $\Q$). So $H$ is (naturally) a type-definable subgroup of $G$, which is, by assumption (1), type-defined over $\acl(\emptyset)$.  Let $X$ be the orbit $H\cdot a$ and let $d$ be its canonical parameter.
So $X$ is fixed setwise under the action of $H$ and moreover $H$ acts transitively on $X$, induced by automorphisms of $\mathfrak M$ which fix $\acl(c)$ and $\Q$ (pointwise).
It follows that all elements of $X$ have the same type over $d,c,\Q$.  In particular
$$
\tp(a/d)\models \tp(a/c,d)
$$
and so $a$ is independent from $c$ over $d$, so also $c$ is independent from $a$ over $d$. Now, as $H$ is type-definable over $\acl(\emptyset)$ have that $d\in\acl(a)$ and so
$$
\cb(\stp(c/a))=\cb(\stp(c/d)).
$$
On the other hand, clearly, $X$, the orbit of $a$
under $H$, is nothing else than the set of realizations of
$\stp(a/c,\Q)$, thus $d$ belongs to $\acl(c,\Q)$ and so does
$\cb(\stp(c/a))$, as desired.

\vspace{2mm}
\noindent
We now assume that $p$ is fundamental, and prove that (2) implies (1) which is, strictly speaking, the new  result.  So assume (2), and let $H$ be a connected type-definable subgroup of
$G$, with canonical parameter $u$. We aim to show that $u\in \acl(\emptyset)$.
 Take a realization $a$ of $p$ independent from $u$ over $\emptyset$, and let $d$  be the canonical parameter of the orbit of $a$ under $H$. We will be using Lemma \ref{PropGal}.

Now by assumption $\cb(\stp(d/a))\in \acl(d,\Q)\cap \acl(a)$. But by (2) of \ref{PropGal}, $a$ is independent from $(\Q,d)$ over $d$, whereby $\cb(\stp(d/a))\in \acl(d)$.  This means that $d$
and $a$ (and so also $a$ and $d$) are independent over $\acl(d)\cap \acl(a)$.  But by (3) of \ref{PropGal}, $d = \cb(\tp(a/d))$ from which we conclude that $d\in \acl(a)$.  Hence by (1) of
\ref{PropGal}, $u\in \acl(a)$. But $u$ and $a$ are independent over $\emptyset$, hence $u\in \acl(\emptyset)$ as required.  \end{proof}

 \medskip
We still fix a family $\Q$ of partial types over a base set of parameters $A$.
We emphasize that by  a {\em (type-)definable Galois group relative to $\Q$} we mean  $\Aut(p/\Q,B)$ for some stationary complete type $p(x)$ over some set $B$ of parameters which contains $A$.   In general $B, C,\ldots$ will denote sets of parameters containing $A$.

In order to prove the  main result of this section we need to introduce the following from \cite{pal-wag}. First, recall that a stationary type $p$ over $B$ is {\em
analyzable in $\Q$} if for any realization $a$ of $p$ there are
$(a_i:i<\alpha)\in\dcl(B,a)$ such that $\tp(a_i/B,a_j:j<i)$ is internal to $\Q$ for all $i<\alpha$, and
$a\in\acl(B,a_i:i<\alpha)$.
We denote by $\ell_1^\Q(a/A)$ the maximal (possibly infinite) tuple $b$ in $\acl(B,a)$ such that $\tp(b/B)$ is internal to $\Q$. An inspection of the proof of \cite[Theorem 3.4]{pal-wag} yields:

\begin{fact}\label{FactDomEquiv} The tuple $\ell_1^\Q(a/B)$ dominates $a$ over $B$: whenever $\ell_1^\Q(a/B)$ is independent from $c$ over $B$, then so is $a$.
\end{fact}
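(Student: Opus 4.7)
The plan is to proceed by induction on the length $\alpha$ of an analysis $(a_i)_{i<\alpha}$ of $\tp(a/B)$ in $\Q$. Because $a \in \acl(B, a_i : i < \alpha)$ is already witnessed by a finite sub-sequence, we may truncate and assume $\alpha = \beta+1$ is a successor; the base case $a \in \acl(B)$ is trivial.

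For the inductive step, fix the first term $a_0$ of the analysis, so that $\tp(a_0/B)$ is internal to $\Q$ and $\tp(a/Ba_0)$ is analyzable in strictly fewer steps. Set $b := \ell_1^\Q(a/B)$ and suppose $b \ind_B c$. By maximality of $\ell_1^\Q(a/B)$ we have $a_0 \in \acl(B,b)$, and hence $a_0 \ind_B c$.

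The crux is to show that $\ell_1^\Q(a/Ba_0)$ and $b$ are interalgebraic over $Ba_0$. The inclusion $b \subseteq \ell_1^\Q(a/Ba_0)$ is immediate from the definition. For the converse, take any $d \in \acl(Ba)$ with $\tp(d/Ba_0)$ internal to $\Q$; since $\tp(a_0/B)$ is internal to $\Q$, transitivity of internality gives that $\tp(da_0/B)$ is also internal to $\Q$, whence $d \in \acl(B,b)$ by maximality of $\ell_1^\Q(a/B)$.

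To conclude, from $b \ind_B c$ and $a_0 \in \acl(Bb)$ we deduce $ba_0 \ind_B c$, and therefore $b \ind_{Ba_0} c$ by enlarging the base. The interalgebraicity above then gives $\ell_1^\Q(a/Ba_0) \ind_{Ba_0} c$, and the inductive hypothesis applied to the shorter analysis of $\tp(a/Ba_0)$ yields $a \ind_{Ba_0} c$. Combined with $a_0 \ind_B c$, transitivity of nonforking produces $a \ind_B c$, as desired. The main obstacle is the interalgebraicity step, which rests squarely on transitivity of internality; the reduction from limit ordinals to successors is a standard maneuver via finite character of algebraic closure.
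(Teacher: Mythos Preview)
Your induction breaks at the interalgebraicity step. The claim that ``transitivity of internality gives that $\tp(da_0/B)$ is also internal to $\Q$'' is not correct: what you need is that if $\tp(a_0/B)$ and $\tp(d/Ba_0)$ are each internal to $\Q$ then so is $\tp(da_0/B)$, but this is exactly the assertion that a two-step $\Q$-analysis collapses to one step, i.e.\ that analyzability in $\Q$ coincides with (almost) internality to $\Q$. That is false in general. Indeed, applying your claim with $d=a_1$, the second term of the analysis, would give $a_1\in\acl(B,b)$, and iterating along the analysis would force $a\in\acl(B,b)$, making $\tp(a/B)$ almost internal to $\Q$ --- contrary to the existence of types that are analyzable but not almost internal. (Genuine transitivity says that internality to a family each of whose members is $\Q$-internal yields $\Q$-internality; to use it you would first need $\tp(da_0/B)$ internal to $\Q\cup\{\tp(a_0/B)\}$, and the required independence of $da_0$ from the witnessing parameters over $B$, rather than over $Ba_0$, is precisely what is not available.) Hence $\ell_1^\Q(a/Ba_0)$ can be strictly larger than $\acl(Ba_0,b)$, and from $b\ind_{Ba_0} c$ you cannot deduce $\ell_1^\Q(a/Ba_0)\ind_{Ba_0} c$; the inductive step does not close.

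Note that the paper supplies no argument of its own here but points to \cite[Theorem~3.4]{pal-wag}. The proof there is not an induction on the length of an analysis; the operative mechanism is rather that $\stp(a/B,\ell_1^\Q(a/B))$ is \emph{foreign} to $\Q$, which combined with $\Q$-analyzability of $\tp(a/B)$ yields the domination directly.
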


Now we can state and prove our main result of this section, which generalizes \cite[Theorem 2.5]{HPP}.

\begin{thm}\label{ThmRigid}
The following are equivalent, where $\Q$ is assumed to be a family of partial types over $A$.
\begin{enumerate}
 \item All Galois groups relative to $\Q$ are rigid.
 \item For any $B$ containing $A$,  if $\tp(a/B)$ is analyzable in $\Q$, then for any tuple $c$, $\cb(\stp(c/B,a))$ is contained in $\acl(B,c,\Q)$.
% \item If the type $\tp(\cb(c/A,a)/A)$ is analyzable in $\Q$, then
%$\cb(c/A,a)$ is contained in $\acl(A,c,\Q)$.
\end{enumerate}
\end{thm}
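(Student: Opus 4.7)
For $(2) \Rightarrow (1)$: any Galois group relative to $\Q$ has the form $\Aut(p/\Q,B)$ for a stationary $p$ internal to $\Q$. Replace $p$ by the type $p'$ of a fundamental system of solutions (this preserves the Galois group, by the discussion following Fact~\ref{FactFree}). Internality is a special case of analyzability, so $(2)$ applied to $p'$ yields precisely condition $(2)$ of Lemma~\ref{LemEquiv}. Since $p'$ is fundamental, the ``moreover'' part of Lemma~\ref{LemEquiv} then delivers rigidity.

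For $(1) \Rightarrow (2)$, I would argue by induction on the length $\alpha$ of a $\Q$-analysis $(a_i)_{i<\alpha}$ of $\tp(a/B)$; by replacing $a$ with $(a_i)_{i<\alpha}$ I may assume they coincide. The base case $\alpha = 1$ (internal) is Lemma~\ref{LemEquiv}. For the successor step $\alpha = \beta+1$, set $a^{**} = a_{<\beta}$ and $e := \cb(\stp(c/Ba))$. Applying Lemma~\ref{LemEquiv} to the internal type $\tp(a_\beta/Ba^{**})$---whose Galois group is rigid by $(1)$---with the tuple $c$ yields
\[
e \in \acl(Ba^{**}c\Q).
\]

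To remove the parameters $a^{**}$, fix a tuple $\bar q$ of realizations of types in $\Q$ large enough that the above inclusion holds with $\bar q$ in place of $\Q$, and apply the inductive hypothesis to $\tp(a^{**}/B)$ (analyzable of length $\beta$) with the tuple $(c,\bar q)$:
\[
g := \cb(\stp(c\bar q/Ba^{**})) \in \acl(Bc\bar q\Q) = \acl(Bc\Q), \quad (c,\bar q) \ind_g Ba^{**}.
\]
The aim is to upgrade this to $e \ind_{gc\bar q} a^{**}$; combined with $e \in \acl(Ba^{**}c\bar q) \subseteq \acl(gc\bar q \cdot a^{**})$, the standard ``algebraic-plus-independent'' principle then gives $e \in \acl(gc\bar q) \subseteq \acl(Bc\Q)$. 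The required independence is obtained by a further application of the inductive hypothesis to $\tp(a^{**}/B)$ with the augmented tuple $(c,\bar q,e)$: the refined canonical base $\cb(\stp(c\bar qe/Ba^{**}))$, compared with $g$ via pair-decomposition of forking, provides the missing independence. The limit step is handled by local character: every element of $\cb(\stp(c/Ba))$ lies in $\cb(\stp(c/Ba_{<i}))$ for some $i<\alpha$, reducing to the successor case.

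The principal obstacle is the forking-calculus step that extracts $e \ind_{gc\bar q} a^{**}$ from the two applications of the inductive hypothesis. The difficulty is that $e$ and $a^{**}$ are intertwined through the final internal layer, while the inductive hypothesis controls only canonical bases of tuples not already involving $e$; coordinating the two applications so that the family $\Q$---which can be arbitrarily entangled with $a^{**}$---is absorbed uniformly is exactly what the reservoir $\bar q$ of $\Q$-realizations accomplishes, and is the key technical device of the argument.
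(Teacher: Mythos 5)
Your proof of $(2)\Rightarrow(1)$ is exactly the paper's: pass to the type of a fundamental system of solutions (which has the same Galois group) and invoke the ``moreover'' clause of Lemma \ref{LemEquiv}. That half is fine.

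The direction $(1)\Rightarrow(2)$ has a genuine gap, located precisely at the step you yourself flag as ``the principal obstacle'': the independence $e \ind_{Bgc\bar q} a^{**}$ is not delivered by the device you sketch, and the sketch is circular. If you apply the inductive hypothesis to the augmented tuple $(c,\bar q,e)$, you obtain $g' := \cb(\stp(c\bar q e/Ba^{**})) \in \acl(B,c,\bar q,e,\Q)$ together with $c\bar q e \ind_{Bg'} a^{**}$. The second statement does yield $e \ind_{Bg'c\bar q} a^{**}$, hence $e\in\acl(B,g',c,\bar q)$ by your ``algebraic-plus-independent'' principle; but the only bound you have on $g'$ contains $e$ itself, so all you have proved is $e\in\acl(B,c,e,\Q)$ --- a tautology. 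To break the circle you would need $g'\in\acl(B,c,\Q)$, which is an instance of the very statement being proved; nor can the reservoir $\bar q$ be chosen in advance ``large enough to capture $e$,'' since knowing that some finite tuple of $\Q$-realizations algebraizes $e$ over $Bc$ \emph{is} the conclusion. (A smaller problem: your limit step needs $\stp(c/Ba)$ to not fork over $Ba_{<i}$ for some $i<\alpha$, which local character gives in the superstable case but not for a general stable $T$ when the analysis has small cofinality --- and Theorem \ref{ThmRigid} is stated for stable $T$. There is also a stationarity point to attend to when applying Lemma \ref{LemEquiv} to $\tp(a_\beta/Ba^{**})$.)

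The paper avoids the layer-by-layer induction, and thereby the entanglement you describe, with one move: set $C := \acl(B,a)\cap\acl(B,c,\Q)$ --- by construction $C$ already contains everything the conclusion could produce --- and let $a_1 = \ell_1^{\Q}(a/C)$ be the \emph{maximal} tuple in $\acl(C,a)$ whose type over $C$ is internal to $\Q$. One application of Lemma \ref{LemEquiv} over the base $C$ gives $\cb(\stp(c/C,a_1)) \subseteq \acl(B,c,\Q)\cap\acl(B,a)=\acl(C)$, hence $c\ind_C a_1$; then Fact \ref{FactDomEquiv} ($a_1$ dominates $a$ over $C$, imported from \cite[Theorem 3.4]{pal-wag}) upgrades this to $c\ind_C a$ in a single stroke, whence $\cb(\stp(c/B,a))=\cb(\stp(c/C,a))\subseteq C\subseteq\acl(B,c,\Q)$. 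So the missing ingredient in your plan is a \emph{domination} statement: rather than disentangling $e$ from $a^{**}$ at each level of the analysis, absorb the target set into the base and let the maximal internal layer dominate the whole analyzable type. If you want to rescue your induction, Fact \ref{FactDomEquiv} is what must replace the ``pair-decomposition'' step.
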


Notice that (1) implies (2) was already (essentially)  proved in \cite[Theorem 2.5]{HPP}. For the sake of completeness we include a proof.

\begin{proof}
Suppose first that all Galois groups relative to $\Q$ are rigid, let $c$ be an arbitrary tuple and consider a
type $\tp(a/B)$ which is analyzable in $\Q$. Let $C$ be the set
$\acl(B,a)\cap\acl(c,B,\Q)$ and set $a_1$ to be $\ell_1^\Q(a/C)$. Note that $\acl(B,a)=\acl(C,a)$ and that $\stp(a_1/C)$ is internal to $\Q$ by definition. As the Galois group of $\stp(a_1/C)$ relative to $\Q$ is rigid by assumption, the canonical base $\cb(\stp(c/C,a_1))$ is algebraic over $B,c,\Q$ by Lemma \ref{LemEquiv}. Thus
$$
\cb(\stp(c/C,a_1))\subseteq\acl(B,a)\cap\acl(B,c,\Q)=\acl(C)
$$
and hence $c$ is independent from $a_1$ over $C$. Whence, $a_1$ and $a$ are domination-equivalent over $C$ by Fact \ref{FactDomEquiv} since $\tp(a/B)$ is analyzable in $\Q$, and thus $c$ is independent from $a$ over $C$. Therefore
$$
\cb(\stp(c/B,a))=\cb(\stp(c/C,a))\subseteq C\subseteq\acl(B,c,\Q)
,$$
as desired.

Assume now that condition (1) fails  holds,  and let $G$ be the Galois group relative to
$\Q$ of some stationary type $\tp(a/B)$, which is internal to $\Q$, such that $G$ is not rigid.  By the discussion in section 1 we may assume that $p$ is fundamental.  By  Lemma \ref{LemEquiv} (where $A$ there corresponds to $B$ here), there is a tuple $c$ such that $\cb(\stp(c/B,a))$ is not contained in $\acl(B,c,\Q)$, contradicting (2).
\end{proof}

\begin{remark} In the above Theorem we can replace condition (2) by:
\newline
(3) :  For any $a,c$ and $B\supseteq A$, if the type $\tp(\cb(\stp(c/B,a)/B))$ is analyzable in $\Q$, then
$\cb(\stp(c/B,a))$ is contained in $\acl(B,c,\Q)$.
\end{remark}
\begin{proof} It is clear that (3) implies (2). Moreover, (3) follows easily from (2) by noticing that
$$\cb(s\tp(c/A,a))=\cb\big(\tp(c\big/\cb(\stp(c/A,a)))\big).$$
\end{proof}

\section{The strong canonical base property and the global theory}

In \cite{HPP} we worked with (stable) theories which are ``coordinatised" by strongly minimal sets without parameters. We will work here with a broader class of finite rank theories:

\subsection*{Assumption} $T$ is a complete superstable theory such that every type has finite $\U$-rank, and such that every non locally modular type of $\U$-rank $1$ is nonorthogonal to $\emptyset$ (namely $T$ is ``nonmultidimensional" with respect to non locally modular types of $\U$-rank 1). \newline

Among such theories are: any $\aleph_{1}$-categorical theory, the finite rank part of ${\rm DCF}_{0}$, and ${\rm CCM}$.
Of course there exist superstable theories of finite rank which do not satisfy the {\bf Assumption}; take a definable family of algebraically closed fields indexed by a definable set with no structure. But we are unaware of any {\em natural} example of a finite rank superstable theory which does not satisfy the assumption. Let us note that in a superstable environment any non locally modular type of $\U$-rank $1$ has Morley rank $1$ (by  Buechler's theorem, see Corollary 3.3 in Chapter 2 of \cite{PilBook}).

We will now make a canonical choice of $\Q$.  We define $\Q$ to be the collection of all  complete types $\tp(a/\emptyset)$ such that $\stp(a/\emptyset)$ is  internal to the family $\mathcal S$ of non locally modular stationary types (over arbitrary parameters) of $\U$-rank $1$.  Bearing in mind the remark above that types in $\mathcal S$ have Morley rank $1$, we could as well take $\Q$ to be the family of formulas $\theta(x)$ over $\emptyset$ which are  internal to $\mathcal S$.  At the level of realizations the two possible $\Q$'s mentioned are identical.  As the proofs below will show there would be no essential difference by considering instead the types over $\emptyset$ which are almost internal to $\mathcal S$.

\begin{defn}
$T$ is said to have the {\em strong canonical base property}  (strong CBP)  if for all $a,b$, if $b = \cb(\stp(a/b))$ then $b\in \acl(a,\Q)$.
\end{defn}

We first prove that the strong CBP is invariant under naming constants.

\begin{lem} The strong CBP is invariant under naming parameters; that is $T$ has the strong CBP if and only if for some/any subset $B$ of $\mathfrak M$, the theory $T_{B}$ of $\mathfrak M$ with names for elements of $B$ has the strong CBP.
\end{lem}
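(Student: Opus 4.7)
The plan is to treat the two directions separately.

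The forward direction---that strong CBP of $T$ implies strong CBP of $T_B$---is routine. Given $a, B'$ in $T_B$, the canonical base $\cb_{T_B}(\stp_{T_B}(a/B'))$ coincides with $\cb_T(\stp_T(a/BB'))$, since $T$ and $T_B$ share forking and stationarity. The strong CBP of $T$ places this tuple in $\acl_T(a,\Q)$; moreover, every realization of a type in $\Q$ (internal to $\mathcal S$ over $\emptyset$) is a fortiori a realization of a type in $\Q_B$ (internal to $\mathcal S$ over $B$), so $\acl_T(a,\Q) \subseteq \acl_T(a, B, \Q_B) = \acl_{T_B}(a,\Q_B)$, yielding the desired conclusion.

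For the reverse direction, assume $T_B$ has the strong CBP, take $a, B'$ in $T$ and set $c = \cb_T(\stp_T(a/B'))$; the goal is $c \in \acl_T(a,\Q)$. The initial move is to replace $B$ by a $T$-automorphic conjugate, which is permissible since $T_B$ and $T_{\sigma(B)}$ are isomorphic theories; I arrange $B \ind_\emptyset aB'c$. Standard forking calculus then gives $a \ind_c BB'$, whence $c = \cb_T(\stp_T(a/BB')) = \cb_{T_B}(\stp_{T_B}(a/BB'))$, and the strong CBP of $T_B$ supplies a tuple $\bar q$ with $c \in \acl_T(a, B, \bar q)$ and $\tp(\bar q/B)$ internal to $\mathcal S$.

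To conclude, I would unfold the $\mathcal S$-internality of $\tp(\bar q/B)$ by writing $\bar q \in \dcl(B^*, \bar s)$ with $B^* \supseteq B$, $\bar q \ind_B B^*$, and $\bar s$ a tuple of $\mathcal S$-realizations, choosing $B^*$ further so that $B^* \ind_B aB'c\bar q$. Transitivity then yields $BB^* \ind_\emptyset ac$, hence $c \ind_a BB^*$, and the inclusion $c \in \acl(a, BB^*, \bar s)$ witnesses that $\tp(c/a)$ is almost internal to $\mathcal S$. Invoking the nonmultidimensionality Assumption---every non-locally-modular $\U$-rank-one type is non-orthogonal to $\emptyset$---together with the remark that $\Q$ may equivalently be taken as the family of types over $\emptyset$ almost internal to $\mathcal S$, I trade the $\mathcal S$-realizations over the auxiliary parameters $BB^*$ for $\Q$-realizations over $\emptyset$, obtaining $c \in \acl_T(a,\Q)$. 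The main obstacle is precisely this last trade; it is where the nonmultidimensionality Assumption is indispensable.
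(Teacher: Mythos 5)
Your forward direction and the first half of your reverse direction track the paper's proof: moving $B$ by an automorphism to arrange $B\ind_\emptyset aB'c$, noting $c=\cb(\stp(a/BB'))$, and extracting $\bar q$ with $c\in\acl(a,B,\bar q)$ and $\tp(\bar q/B)\in\Q_B$ is exactly how the paper begins. The genuine gap is the final ``trade.'' After unfolding internality you hold $c\in\acl(a,BB^*,\bar s)$ with $BB^*\ind_\emptyset ac$ and $\bar s$ a tuple of $\mathcal S$-realizations over $BB^*$; what this configuration yields is that $\stp(c/a)$ is \emph{almost internal} to $\mathcal S$ (and, via the paper's Lemma 3.3, almost internal to $\Q$). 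But that is the conclusion of the ordinary CBP, not the strong CBP. The strong CBP demands $c\in\acl(a,\Q)$, i.e.\ algebraicity over $a$ together with tuples whose types \emph{over $\emptyset$} are internal to $\mathcal S$; almost internality permits the auxiliary parameters $BB^*$ inside the algebraic closure, and no appeal to nonorthogonality-to-$\emptyset$ by itself removes them --- a realization of a type in $\mathcal S$ over $BB^*$ need not realize any type in $\Q$ (in the finite-rank part of ${\rm DCF}_0$, a generic solution of a linear equation over nonconstant parameters is $\mathcal C$-internal over those parameters, yet its $\emptyset$-type need not be in $\Q$). The distinction between ``internal to'' and ``algebraic in'' is the entire point of the strong CBP, so the step you defer to the Assumption is the whole content of the lemma; moreover the Assumption is not what powers it --- the paper's proof of this lemma never uses nonorthogonality to $\emptyset$ (that hypothesis is consumed by Lemma 3.3), only the inclusion $\mathcal S_B\subseteq\mathcal S$.

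The missing idea is a canonical-base maneuver. The paper sets $c'=\cb(\stp(\bar qB/ac))$ (in its notation, $c=\cb(\stp(dB/ab))$). Then (i) $ac\ind_{c'}\bar qB$, so $c\in\acl(a,B,\bar q)$ gives $c\in\acl(a,c')$, eliminating the parameters; and (ii) $c'$ lies in $\dcl(\bar q_0B_0,\ldots,\bar q_kB_k)$ for a Morley sequence $(\bar q_iB_i)$ in $\stp(\bar qB/ac)$ with $\bar q_0B_0=\bar qB$, and from $ac\ind_\emptyset B$ one deduces $ac\ind_\emptyset B_0\cdots B_k$, hence $c'\ind_\emptyset B_0\cdots B_k$ since $c'\in\acl(ac)$; as each $\tp(\bar q_i/B_i)$ is internal to $\mathcal S_{B_i}\subseteq\mathcal S$, this independence forces $\tp(c'/\emptyset)$ itself to be internal to $\mathcal S$, i.e.\ $\tp(c')\in\Q$. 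Thus $c\in\acl(a,c')\subseteq\acl(a,\Q)$. Your unfolding into $B^*$ and $\bar s$ is harmless but unnecessary; applying the same maneuver to $\cb(\stp(BB^*\bar s/ac))$ would also close your version of the argument. As written, though, the last step of your proposal asserts precisely the implication (almost internality over auxiliary parameters yields algebraicity in $\Q$ over $a$) that requires this argument, so the proof is incomplete at its crux.
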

\begin{proof}  Given a set $B$ of parameters, let $\mathcal S_B$ denote the family $\mathcal S$ in the theory $T_B$, i.e. the family of non locally modular stationary $\U$-rank $1$ types over arbitrary parameters containing $B$. Thus $\mathcal S_B$ is contained in $\mathcal S$. On the other hand, let $\Q_{B}$ correspond to $\Q$ in the theory  $T_{B}$, which amounts to it being the set of (strong) types over $B$ which are internal to ${\mathcal S}_B$. Notice that if a type $\tp(c)$ is in $\Q$ then $\tp(c/B)$ belongs to $\Q_{B}$, so it follows from the definitions that if $T$ has the strong CBP then $T_{B}$ does too.

Now for the other direction.  Let $B$ be a set of parameters and assume that $T_{B}$ has the strong CBP. It is easy to see that for any $B'$ with the same type as $B$ over $\emptyset$, $T_{B'}$ has the strong CBP. Now let $a,b$ be such that $b= \cb(\stp(a/b))$. We want to show that $b\in \acl(a,\Q)$. By the above remark, we may assume that $B$ is independent from $a,b$ over $\emptyset$. Now $b$ is still $\cb(\stp(a/b,B))$, so $b\in \acl(a,B,d)$ for some tuple $d$ of realizations of $Q_{B}$. Now consider $c = \cb(\stp(d,B/a,b))$.  We have that $c\in \dcl(d_0,B_0,d_{1},B_{1},\ldots,d_{k},B_{k})$ for some Morley sequence $(d_0B_0, d_{1}B_{1},\ldots)$  in $\stp(dB/ab)$ with $d_0B_0=dB$. As $ab$ is independent from $B$ it follows that $ab$ is independent from $B_0B_{1}\ldots B_{k}$, and so we get that $c$ is independent from $B_0B_1\ldots B_k$ as $c\in\acl(ab)$. Since each $\tp(d_i/B_i)$ is internal to $\mathcal S_{B_i}$ and $\mathcal S_{B_i}$ is contained in $\mathcal S$,  each $\tp(d_i/B_i)$ is internal to $\mathcal S$ and so is $\tp(c)$. Hence $\tp(c)$ belongs to $\Q$. Moreover, by definition of $c$, we get that $ab$ is independent from $dB$ over $c$ and hence, as $b\in \acl(B,a,d)$, $b\in \acl(a,c)$, completing the proof.
\end{proof}

\begin{lem} Let $p\in S(B)$ be a stationary type. Then $p$ is analyzable in $\mathcal S$ iff $p$ is analyzable in  $\Q$.
\end{lem}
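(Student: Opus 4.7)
The plan is to show that in our superstable, finite-rank, nonmultidimensional setting, internality to $\mathcal{S}$ and internality to $\mathcal{Q}$ are essentially the same notion, and then transfer this step-by-step to full analyses.

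The direction ``analyzable in $\mathcal{Q}$ implies analyzable in $\mathcal{S}$'' is immediate from the definition of $\mathcal{Q}$ together with the transitivity of internality: every type in $\mathcal{Q}$ is by definition internal to $\mathcal{S}$, so the same witnesses $(a_i)_{i<\alpha}\in\dcl(B,a)$ of a $\mathcal{Q}$-analysis of $p$ also witness an $\mathcal{S}$-analysis.

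For the converse, I would show that any type internal to $\mathcal{S}$ is almost internal to $\mathcal{Q}$. Let $e$ realize some $r\in\mathcal{S}$ over parameters $E$. By the nonmultidimensionality hypothesis, $r$ is nonorthogonal to some stationary $\U$-rank $1$ type $r^0$ over $\emptyset$; since local modularity is preserved under nonorthogonality, $r^0$ is itself non locally modular and hence lies in $\mathcal{S}$. Because $r^0$ is over $\emptyset$, any realization $f\models r^0$ has $\stp(f/\emptyset)\in\mathcal{S}$, so $\tp(f/\emptyset)\in\mathcal{Q}$. Nonorthogonality of two $\U$-rank $1$ stationary types yields (after suitable conjugation over $E$) a realization $f\models r^0$ with $e\in\acl(E,f)$. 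Applying this replacement componentwise to any witness $\bar e$ of the internality to $\mathcal{S}$ of an analysis step $\tp(a_i/B,a_{<i})$ produces a tuple $\bar f$ of $\mathcal{Q}$-realizations with $a_i\in\acl(D,\bar f)$, so the step is almost internal to $\mathcal{Q}$.

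The main obstacle is the mild discrepancy between ``almost internal to $\mathcal{Q}$'' at each step and the ``internal to $\mathcal{Q}$'' required by the paper's definition of analyzability. This is a standard technicality: the finite algebraic orbit of $a_i$ over $B,a_{<i},\bar f$ can be absorbed by inserting, before $a_i$, the canonical parameter of the orbit (which lies in $\dcl$ of the base and $\bar f$, giving an honest internal-to-$\mathcal{Q}$ step), after which $a_i$ is algebraic over $B,a_{<i}$ together with this canonical parameter. The resulting refined sequence is a genuine $\mathcal{Q}$-analysis whose total realization still satisfies $a\in\acl(B,a_{<\alpha})$. This bookkeeping is routine but is the only moderately delicate ingredient.
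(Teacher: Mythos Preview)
Your overall strategy is sound but differs from the paper's, and one step as written is not correct. You claim that nonorthogonality of the $\U$-rank $1$ types $r$ (over $E$) and $r^0$ (over $\emptyset$) yields, after conjugation over $E$, a realization $f\models r^0$ with $e\in\acl(E,f)$. This is false over the original base $E$: nonorthogonality only gives $e\in\acl(D,f)$ for some larger $D$. You tacitly correct this by writing $a_i\in\acl(D,\bar f)$ afterwards, but for almost internality you must also verify that $a_i$ remains independent from this enlarged $D$ over $B,a_{<i}$. The fix is to take $D$ to be a model $M$ from the outset: over a model, nonorthogonality becomes non-almost-orthogonality, so for each $j$ one obtains $f_j\models r_j^0$ with $e_j\in\acl(M,f_j)$ without further enlarging $M$, and the required independence of $a_i$ from $M$ is already in hand. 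A second minor gap: the Assumption literally says $r$ is nonorthogonal to \emph{some} type over $\emptyset$; that one may take $r^0$ stationary of $\U$-rank $1$ over $\acl(\emptyset)$ is an additional (standard) step you should name.

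The paper argues quite differently. Working over a model $M$ with $a\in\dcl(M,b_1,\dots,b_n)$ and each $b_i$ realizing a type in $\mathcal{S}$ over $M$, it first uses nonorthogonality to $\emptyset$ to find a single tuple $c$ with $\tp(c/M)$ nonforking over $\emptyset$ and every $b_i\in\acl(M,c)$, and then sets $c_0=\cb(\stp(\bar bM/c))$. Since $c_0\in\acl(c)$, it is independent from $M$, and since $c_0\in\dcl(\bar bM,\bar b_1M_1,\dots,\bar b_kM_k)$ for a Morley sequence in $\stp(\bar bM/c)$, it is independent from $M,M_1,\dots,M_k$; hence $\tp(c_0/\emptyset)$ is internal to $\mathcal{S}$, i.e.\ $c_0$ realizes $\mathcal{Q}$, and $a\in\acl(M,c_0)$ gives almost internality to $\mathcal{Q}$. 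Your componentwise replacement is more elementary and closer to the raw combinatorics of nonorthogonality; the paper's canonical-base trick produces one $\mathcal{Q}$-witness in a single stroke and sidesteps the parameter-management your route requires.
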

\begin{proof} Suppose first that  $p(x)\in S(B)$ is internal to $\Q$. As every type in $\Q$ is internal to $\mathcal S$, an easy argument yields that $p$ is internal to $\mathcal S$ as well, see Remark 4.3 of Chapter 7 of \cite{PilBook}. % So for some nonforking extension over a model $p'(x)\in S(M)$ of $p$, some $a$ realizing $p'$, and some tuple $c$ from $\Q$, $a\in dcl(M,c)$.  Now by our assumption on $c$ there is some model $N$ independent of $c$ over $\emptyset$ and types $q_{1},\ldots,q_{n}\in \mathcal S$, over subsets of $N$, and a tuple $d$ of realizations of $q_{j}$'s such that $c\in dcl(N,d)$. We may assume $N$ to be independent with $a,M,c$ over $\emptyset$. But then $\tp(a/M,N)$ is a nonforking extension of $p$ and $a\in \dcl(M,N,d)$. So $p$ is internal to $\mathcal S$.  
This shows that any stationary type which is analyzable in $\Q$ is analyzable in $\mathcal S$.

 For the converse, suppose first that $p(x)\in S(B)$ is stationary and internal to $\mathcal S$. We will show that $p$ is almost internal to $\Q$, which is enough to show that analyzability in $\mathcal S$ implies analyzability in $\Q$ by Lemma 1.3(i) of Chapter 8 of \cite{PilBook}.

Let $M\supset B$ be a model, and $a$ a realization of $p' = p|M$ (unique nonforking extension of $p$ to $M$), such that $a\in \dcl(M,b_{1},\ldots,b_{n})$ where each $b_{i}$ realizes some $q_{i}\in {\mathcal S}$ whose domain is in $M$.   As each $q_{i}$ is nonorthogonal to $\emptyset$ we may assume (by enlarging $M$) that there is some $c$ such that $\tp(c/M)$ does not fork over $\emptyset$ and each $b_{i}$ forks with $c$ over $\emptyset$ (so  $b_{i}\in \acl(M,c)$). Let ${\bar b} = (b_{1},\ldots,b_{n})$, and let $c_{0}$ be $\cb(\stp({\bar b}M/c))$. %(i.e. the canonical base of the not necessarily stationary type $\tp({\bar b}M/c)$ in the sense of \cite{PilBook}). 
 So $c_{0}\in \acl(c)\cap \dcl({\bar b},M,{\bar b}_{1},M_{1},\ldots,{\bar b},M_{k})$ where $({\bar b}M, {\bar b}M_{1},\ldots,{\bar b}_{k}M_{k})$ is a suitable $c$-independent set of realizations of $\stp({\bar b}M/c)$.
As $c_{0}\in \acl(c)$ and $M$ is independent from $c$ over $\emptyset$ it follows that $c_{0}$ is independent from ($M,M_{1},\ldots,M_{k})$ over $\emptyset$ and hence $\tp(c_{0}/\emptyset)$ is internal to $\mathcal S$, so is in $\Q$.  As each $b_{i}\in \acl(M,c_{0})$, it follows that $a\in \acl(M,c_{0})$. So $\tp(a/B)$ is almost internal to $\Q$.
\end{proof}

%We need the following result of Chatzidakis \cite[Theorem 1.16]{zoe}:

%\begin{fact}\label{FactZoe}
%Let $A$ denote $\acl(a)\cap\acl(b)$ and suppose that $\tp(\cb(a/b)/A)$ has finite $\U$-rank. Then there are non one-based types $q_1,\ldots,q_n$ of $\U$-rank $1$ and tuples $b_1,%\dots,b_n$ independent over $A$ such that $\tp(b_i/A)$ is analyzable in the family of $A$-conjugates of $q_i$ for all $i\le n$ and $\acl(\cb(a/b))=\acl(b_1,\ldots,b_n)$.
%\end{fact}

%Under our assumption, note that the types $q_1,\ldots,q_n$ given by Fact \ref{FactZoe} can be taken from $\D$.
Remember that a Galois group relative to $\Q$ is precisely something of the form $\Aut(p/B,\Q)$ for some stationary $p = \tp(a/B)$ which is internal to $\Q$. Owing to the members of $\mathcal S$ having Morley rank $1$, it follows that any such Galois group is a definable (rather than type-definable) group and moreover has  finite Morley rank.  So rigidity of this group means that all connected definable subgroups are defined over the base set $B$.

The following is our main theorem, which uses the results in the previous section as well as results from \cite{zoe}.

\begin{thm}\label{ThmFinRank}
The following are equivalent:
\begin{enumerate}
\item All Galois groups relative to $\Q$ are rigid.
\item The strong CBP holds.
\end{enumerate}
\end{thm}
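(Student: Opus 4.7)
The plan is to reduce both directions of the equivalence to Theorem \ref{ThmRigid}, which translates rigidity of Galois groups relative to $\Q$ into a canonical-base statement valid for types analyzable in $\Q$.

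For (2) $\Rightarrow$ (1), I would assume the strong CBP and verify condition (2) of Theorem \ref{ThmRigid}. So let $B$ be any set, $\tp(a/B)$ analyzable in $\Q$, and $c$ an arbitrary tuple; set $b = \cb(\stp(c/B,a))$. Because $c$ is independent from $(B,a)$ over $b$ and $\tp(c/b)$ is stationary, one has $\cb(\stp(c/b)) = b$. The strong CBP applied to the pair $(c,b)$ then yields $b \in \acl(c,\Q) \subseteq \acl(B,c,\Q)$, which is precisely condition (2) of Theorem \ref{ThmRigid}; hence (1) follows.

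For (1) $\Rightarrow$ (2), I would assume all Galois groups relative to $\Q$ are rigid, so that condition (2) of Theorem \ref{ThmRigid} is at our disposal. Given $a$, $b$ with $b = \cb(\stp(a/b))$, the goal is $b \in \acl(a,\Q)$. Provided $\tp(b/\emptyset)$ is analyzable in $\Q$, applying Theorem \ref{ThmRigid}(2) with $B = \emptyset$, its ``$a$'' taken to be our $b$ and its ``$c$'' taken to be our $a$, delivers $\cb(\stp(a/b)) \in \acl(a,\Q)$, that is $b \in \acl(a,\Q)$.

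The whole proof therefore reduces to showing that, under hypothesis (1), the type over $\emptyset$ of any canonical base $b = \cb(\stp(a/b))$ is analyzable in $\mathcal S$ --- equivalently, in $\Q$, by the lemma just before the theorem. For this I would appeal to the analysis of canonical bases carried out by Chatzidakis in \cite{zoe}: in the finite $\U$-rank setting of this section, together with the canonical-base information produced by Theorem \ref{ThmRigid}, any canonical base has type analyzable in $\mathcal S$. This final analyzability step is the main obstacle; once it is granted, the equivalence drops out of Theorem \ref{ThmRigid} by the bookkeeping above, but excluding a locally-modular, $\mathcal S$-orthogonal contribution to $\tp(b)$ is nontrivial and genuinely requires the structural input from \cite{zoe} combined with the nonmultidimensionality assumption on non-locally-modular rank $1$ types.
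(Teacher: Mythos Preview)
Your direction (2)$\Rightarrow$(1) is correct and is the paper's argument (stated directly rather than by contrapositive).

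The direction (1)$\Rightarrow$(2), however, rests on a claim that is false: it is \emph{not} true that $\tp(b/\emptyset)$ is analyzable in $\mathcal S$ whenever $b=\cb(\stp(a/b))$, even under hypothesis (1). For a minimal counterexample, let $T$ be the theory of an infinite set with no structure. Then $\mathcal S$ is empty, $\Q$ consists only of algebraic types, and every Galois group relative to $\Q$ is trivial (hence rigid), so (1) holds. Yet for $a=(a_1,a_2)$ a pair of distinct elements and $b=a_1$ one checks $b=\cb(\stp(a/b))$, while $\tp(b/\emptyset)$ is nonalgebraic and hence not analyzable in $\mathcal S$. The locally modular, $\mathcal S$-orthogonal contribution to $\tp(b)$ that you flag as the obstacle is genuinely present and cannot be removed over $\emptyset$; neither \cite{zoe} nor anything else will do that.

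What Chatzidakis actually proves (and what the paper uses) is that $\tp\big(b/\acl(a)\cap\acl(b)\big)$ is analyzable in $\mathcal S$. The paper then applies Theorem \ref{ThmRigid} with base set $B=\acl(a)\cap\acl(b)$ (taking ``$a$'' there to be our $b$ and ``$c$'' there to be our $a$), obtaining $b\in\acl\big(a,\ \acl(a)\cap\acl(b),\ \Q\big)$; since $\acl(a)\cap\acl(b)\subseteq\acl(a)$, this collapses to $b\in\acl(a,\Q)$. So the fix is not to force analyzability down to $\emptyset$, but to let the base $B$ in Theorem \ref{ThmRigid} absorb $\acl(a)\cap\acl(b)$, which then disappears into $\acl(a)$ at the last step.
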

\begin{proof}
(1) implies (2). Assume (1). Let $\tp(a/b)$ be stationary and assume $b$ is its canonical base.  By Theorem 1.16 of \cite{zoe} (see also \cite{pi01}),  the type $\tp(b/\acl(a)\cap \acl(b))$ is analyzable in $\mathcal S$. By Theorem \ref{ThmRigid}, (1) implies (2),  we have that $b\in \acl(a, (\acl(a)\cap \acl(b)), \Q$). As $\acl(a)\cap \acl(b)\subseteq \acl(a)$, it follows that $b\in \acl(a,\Q)$ giving (2).

Conversely, suppose that some Galois group relative to $\Q$ is not rigid. Then by Theorem \ref{ThmRigid} again, for some $B$ and $\tp(a/B)$ analyzable in $\Q$, and tuple $c$,
$\cb(\stp(c/B,a)$ is not contained in $\acl(B,c,\Q)$. In particular writing $b$ for $\cb(\stp(c/B,a))$, $b\notin \acl(c,\Q)$, so the strong CBP fails.
\end{proof}

\begin{expl} Consider the $2$-sorted structure $M$ consisting of the field $\mathbb C$ of complex numbers, together with an $n$-dimensional vector space $V$ over $\mathbb C$, where the language on the sort $\mathbb C$ is the unitary ring language, the language on $V$ is the abelian group language, and there is a function in the language for scalar multiplication,  ${\mathbb C}\times V$ to $V$.  Let $T = {\rm Th}(M)$.  Then $V$ is internal to $\mathbb C$ and  ${\rm Aut}(V/{\mathbb C})$ is ${\rm GL}_n(V)$ which is not rigid. However in this case (as $V$ is internal to $\mathbb C$),  $\Q$ is everything, and so there are NO nontrivial Galois groups relative to $\Q$ whereby $T$ has the strong CBP.
\end{expl}

\begin{expl} Let $T$ be the finite rank part of ${\rm DCF}_{0}$, namely the ($\omega$-stable) theory of the many sorted structure whose sorts are the $\emptyset$-definable sets of finite Morley rank in a given model of ${\rm DCF}_{0}$ (of course equipped with all induced structure).  Then $T$ does not have the strong CBP.
\end{expl}
\noindent
{\em Explanation.}  As the field $\mathcal C$ of constants (a sort in $T$) is the unique non locally modular strongly minimal set in $T$, up to nonorthogonality, it follows that internality to $\Q$ is equivalent to internality to $\mathcal C$, in $T$.  Now differential Galois theory gives us some set of parameters $B$ and a Galois group $G$ over $B$ relative to $\mathcal C$  which is definably isomorphic to ${\rm GL}(n,{\mathcal C})$ for  $n\geq 2$. So $G$ is clearly nonrigid.  The above remarks, together with Theorem 3.3  says that $T$ does not have the strong CBP.

\vspace{5mm}

We conclude with some comments and questions.  Firstly, we could weaken the {\bf Assumption} of this section,  by dropping the finite rank hypothesis, and then define $T$ to have the strong CBP if whenever $\tp(a)$ has finite $\U$-rank then for any $b$ such that $b=\cb(\stp(a/b))$, $b\in \acl(a,\Q)$.  Theorem 3.4 goes through, but more care would have to be taken with Lemma 3.2.
Secondly, the results of this paper should go through with suitable formulations in the (super) simple case;  the problem with a generalization to simple theories is that Galois groups are only {\em almost
hyperdefinable}.  Thirdly, it seems natural ask whether there is a Galois-theoretic interpretation or account of the CBP.

\bibliographystyle{plain}

\end{document}